\newtheorem{thm}{Theorem}
\newtheorem{coro}[thm]{Corollary}
\newtheorem{lem}[thm]{Lemma}
\newtheorem{prop}[thm]{Proposition}
\newtheorem{rk}[thm]{Remark}
\def\RR{\mathbb{R}}
\def\ZZ{\mathbb{Z}}
\def\EE{\mathbb{E}}
\newcommand{\p} {\textnormal{\textsf{P}}}
\newcommand {\cro}[1] {\left[ {#1} \right]}
\def \ind {\hbox{ 1\hskip -3pt I}}
\newcommand {\va}[1] {\left| {#1} \right|}
\begin{document}

\title[Range of the Campanino and P\'etritis random walk]
{On the range of the Campanino and P\'etritis\\* random walk}
\author{Nadine Guillotin-Plantard} 
\address{Institut Camille Jordan, CNRS UMR 5208, Universit\'e de Lyon, Universit\'e Lyon 1, 43, Boulevard du 11 novembre 1918, 69622 Villeurbanne, France.}
\email{nadine.guillotin@univ-lyon1.fr}

\author{Fran\c{c}oise P\`ene}
\address{Universit\'e de Brest and IUF,
LMBA, UMR CNRS 6205, 29238 Brest cedex, France}
\email{francoise.pene@univ-brest.fr}

\subjclass[2000]{60F05; 60G52}
\keywords{Range; random walk in random scenery; local limit theorem; local time; stable process\\
This research was supported by the french ANR project MEMEMO2}

\begin{abstract}
We are interested in the behaviour of the range of the Campanino and P\'etritis random walk \cite{CP},
namely a simple random walk on the lattice $\mathbb Z^2$ with random orientations of the horizontal layers. 
We also study the range of random walks in random scenery, from which the asymptotic behaviour of  the range of the first coordinate 
of the Campanino and P\'etritis random walk can be deduced.
\end{abstract}

\maketitle

\section{Introduction and main results}
We consider the random walk on a randomly oriented lattice $M=(M_n)_n$ 
considered by Campanino and P\'etritis \cite{CP}. It is a particular example of transient 2-dimensional random
walk in random environment. We fix a $p\in(0,1)$ corresponding to the probability for
$M$ to stay on the same horizontal line. 
The environment is given by a sequence $\epsilon=(\epsilon_k)_{k\in\mathbb Z}$
of i.i.d. (independent identically distributed) centered random variables with
values in $\{\pm 1\}$ and defined on the probability space
$(\Omega,\mathcal T,\p)$. Given $\epsilon$, $M$ is a 
closest-neighbourghs random walk on $\mathbb Z^2$ starting
from $0$ (i.e. $\mathbb P^\epsilon(M_0=0)=1$) and with transition probabilities
$$\mathbb P^\epsilon(M_{n+1}=(x+ \epsilon_y,y)|M_n=(x,y))=p,\quad
   \mathbb P^\epsilon(M_{n+1}=(x,y\pm 1)|M_n=(x,y))=\frac{1-p}2. $$
We will write $\mathbb P$ for the annealed expectation, that is the
integration of $\mathbb P^\epsilon$ with respect to $\p$. 
In the papers \cite{GPN} and \cite{BFFN1} respectively, a functional limit theorem and a local limit theorem were proved for the random walk $M$ under the annealed measure $\mathbb P$.
In this note we are interested in the asymptotic behaviour of the range ${\mathcal R}_n$ of $M$, i.e.
of the number of sites visited by $M$ before time $n$: 
$${\mathcal R}_n:=\#\{M_0,\dots M_{n}\}.$$
Since we know (see \cite{CP,BFFN1}) that $M$ is transient for almost every environment $\epsilon$,
it is not surprising that ${\mathcal R}_n$ has order $n$. More precisely we prove the following
result.
\begin{prop}\label{cvpsrange}
The sequence $({\mathcal R}_n/n)_n$ converges $\mathbb P$-almost surely to $\mathbb P[M_j\ne 0,\ \forall j\geq 1]$.
\end{prop}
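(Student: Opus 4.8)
The plan is to count every visited site at the last instant it is visited before time $n$. Set $I_j:=M_j-M_{j-1}$ and, for $0\le k\le n$, let $T_k^+:=\inf\{j>k:\ M_j=M_k\}$ be the first return time to $M_k$. Each site of $\{M_0,\dots,M_n\}$ is then counted exactly once, so that
\[
\mathcal{R}_n=\sum_{k=0}^n\mathbf 1_{\{T_k^+>n\}}=\underbrace{\sum_{k=0}^n\mathbf 1_{\{T_k^+=\infty\}}}_{=:\Sigma_n}+\underbrace{\sum_{k=0}^n\mathbf 1_{\{n<T_k^+<\infty\}}}_{=:G_n}.
\]
Here $\Sigma_n$ counts the sites that are never revisited, while the ``gap'' term $G_n$ counts the sites visited both before and after time $n$. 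Because $\{T_0^+=\infty\}=\{M_j\neq 0,\ \forall j\geq 1\}$, the target constant is $\mathbb E[\mathbf 1_{\{T_0^+=\infty\}}]=\mathbb P[M_j\ne 0,\ \forall j\ge1]$, and it will suffice to prove $\Sigma_n/n\to\mathbb P[M_j\ne 0,\ \forall j\ge1]$ and $G_n/n\to0$, $\mathbb P$-almost surely.

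The structural observation driving everything is that, \emph{under the annealed measure} $\mathbb P$, the increment sequence $(I_j)_{j\ge1}$ is stationary. Indeed the vertical coordinate $(Y_j)_j$ is an autonomous lazy simple random walk on $\mathbb Z$, with symmetric increments $\xi_j\in\{-1,0,1\}$, and is \emph{independent of the environment} $\epsilon$; the horizontal moves only read off $\epsilon$ at the current height. Hence the environment seen from the walker $\bar\epsilon_j:=(\epsilon_{Y_j+m})_{m\in\mathbb Z}$ is a Markov chain that preserves the shift-invariant product law $\p$ of $\epsilon$, and $I_{j+1}$ is a fixed function of $\bar\epsilon_j$ and of an independent variable choosing the type and sign of the step. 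Writing $S$ for the (measure-preserving) shift on increment space and
\[
f(\omega):=\mathbf 1\Big\{\textstyle\sum_{i=1}^{j}\omega_i\neq0\ \ \forall j\ge1\Big\},\qquad f_R(\omega):=\mathbf 1\{R<\tau(\omega)<\infty\},\quad \tau(\omega):=\inf\{j\ge1:\ \textstyle\sum_{i=1}^{j}\omega_i=0\},
\]
one has $\mathbf 1_{\{T_k^+=\infty\}}=f\circ S^k$ for every $k$, and $\mathbf 1_{\{n<T_k^+<\infty\}}\le f_R\circ S^k$ whenever $k\le n-R$.

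Granting that $(I_j)_j$ is moreover \emph{ergodic}, Birkhoff's pointwise ergodic theorem applied to $f$ yields $\Sigma_n/n=\frac1n\sum_{k=0}^n f\circ S^k\to\mathbb E[f]=\mathbb P[M_j\ne0,\ \forall j\ge1]$ almost surely. For the gap, fix an integer $R\ge1$: since $\mathbf 1_{\{n<T_k^+<\infty\}}\le f_R\circ S^k$ for $k\le n-R$ and at most $R$ indices remain, $G_n\le\sum_{k=0}^{n}f_R\circ S^k+R$, whence Birkhoff gives $\limsup_n G_n/n\le\mathbb E[f_R]=\mathbb P[R<T_0^+<\infty]$ almost surely. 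Letting $R\to\infty$ along the integers, $\mathbb P[R<T_0^+<\infty]\downarrow0$ (the events decrease to $\emptyset$), so $G_n/n\to0$ a.s. Combining the two limits proves the proposition.

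The real work, and the main obstacle, is the ergodicity of the increment process, equivalently of the environment-seen-from-the-walker chain $(\bar\epsilon_j)_j$. Since the vertical walk is recurrent, $(\bar\epsilon_j)_j$ returns to $\epsilon$ infinitely often and is null recurrent, so ergodicity is not automatic: the chain is precisely a $[T,T^{-1}]$-type skew product of the Bernoulli shift carrying the i.i.d. steps $(\xi_j)$ by the shift $\theta$ on $(\{\pm1\}^{\mathbb Z},\p)$, with cocycle $Y_j$. Its ergodicity should follow from the mixing of $\theta$ together with the aperiodicity and recurrence of the vertical walk, and establishing this cleanly is the crux. Alternatively one may bypass ergodicity by a second-moment bound $\mathrm{Var}(\mathcal R_n)=o(n^2)$, obtained from the local limit theorem of \cite{BFFN1}, combined with Borel--Cantelli along $n_\ell=\ell^2$ and the monotonicity of $n\mapsto\mathcal R_n$. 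Positivity of the limit, i.e. genuine transience, is provided by \cite{CP,BFFN1}.
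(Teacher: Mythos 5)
Your proposal is correct, and its skeleton coincides with the paper's up to the key error estimate, where you take a genuinely different and softer route. The first half is the same: your ``environment seen from the walker'' chain is exactly the skew product $T((\epsilon_k)_k,(\omega_k)_k)=((\epsilon_{k+\omega_0})_k,(\omega_{k+1})_k)$ that the paper constructs, your $\Sigma_n$ is the paper's $\mathcal R'_n$, and both proofs obtain $\Sigma_n/n\to\mathbb P[M_j\ne 0,\ \forall j\geq 1]$ from Birkhoff's theorem once ergodicity of that skew product is granted; note that the paper does not prove this ergodicity either but cites \cite{KMC}, p.~162, so your hedging on that point leaves you exactly on par with the published argument. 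The genuine divergence is in controlling the gap $G_n=\mathcal R_n-\mathcal R'_n$. The paper's Proposition \ref{proptransient} is quantitative: it feeds in the local limit theorem of \cite{BFFN1}, $\mathbb P(M_n=0)=O(n^{-5/4})$, to get $\Vert \mathcal R_n-\mathcal R'_n\Vert_1=O(n^{2-\theta})$ with $\theta=5/4$, and then upgrades to almost sure convergence by Borel--Cantelli along the subsequence $n=k^\gamma$ with $\gamma(\theta-1)>1$, together with the monotonicity of $n\mapsto\mathcal R_n$. Your domination $\mathbf 1_{\{n<T_k^+<\infty\}}\le f_R\circ S^k$ for $k\le n-R$ (with at most $R$ leftover indices), followed by Birkhoff applied to each $f_R$ and letting $R\to\infty$ through a countable family of almost sure events, is the classical Kesten--Spitzer--Whitman truncation: it is entirely soft, requires no decay of return probabilities at all (indeed not even transience, since $\mathbb P[R<T_0^+<\infty]\downarrow 0$ in any case), and so your version of the key lemma is strictly more general than the paper's Proposition \ref{proptransient}, which assumes polynomial decay with $\theta>1$. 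What the paper's quantitative route buys in exchange is a rate, namely the $L^1$ bound $O(n^{2-\theta})$ on $\mathcal R_n-\mathcal R'_n$, which your argument does not provide. One caveat on an inessential aside: your proposed ``bypass'' of ergodicity via $\mathrm{Var}(\mathcal R_n)=o(n^2)$ would not suffice as stated, since Borel--Cantelli along $n_\ell=\ell^2$ requires $\sum_\ell \mathrm{Var}(\mathcal R_{\ell^2})/\ell^4<\infty$, i.e.\ a summable rate rather than mere $o(n^2)$; but since your main argument never invokes this alternative, the proof itself is unaffected.
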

We observe that the almost sure convergence result stated for the annealed
probability $\mathbb P$ implies directly the same convergence result 
for the quenched probability $\mathbb P^\epsilon$
for $\p$-almost every $\epsilon$.

Since ${\mathcal R}_n\le n+1$, due to the Lebesgue dominated convergence theorem,
we directly obtain the next result.
\begin{coro}
We have
$\mathbb E[{\mathcal R}_n]\sim n \mathbb P[M_j\ne 0,\ \forall j\geq 1]$ and
$\mathbb E^\epsilon[{\mathcal R}_n]\sim n \mathbb P[M_j\ne 0,\ \forall j\geq 1]$
for $\p$-almost every $\epsilon$.
\end{coro}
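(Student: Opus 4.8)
The plan is to deduce the corollary directly from Proposition~\ref{cvpsrange}, combining the almost sure convergence of ${\mathcal R}_n/n$ with a deterministic upper bound that makes the family $({\mathcal R}_n/n)_n$ dominated by a constant. Write $q:=\mathbb P[M_j\ne 0,\ \forall j\ge 1]$ for the common candidate limit. Since $M$ is transient for $\p$-almost every $\epsilon$, the walk returns to $0$ only finitely often, so $q>0$ and the claimed equivalences $\sim nq$ are genuine non-degenerate asymptotics rather than mere $o(n)$ statements; I would note this at the outset so that the symbol $\sim$ is meaningful.

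First I would record the deterministic bound. The range ${\mathcal R}_n$ counts the number of distinct sites among $M_0,\dots,M_n$, hence ${\mathcal R}_n\le n+1$ surely, and therefore $0\le {\mathcal R}_n/n\le (n+1)/n\le 2$ for every $n\ge 1$. In particular the variables ${\mathcal R}_n/n$ are dominated, uniformly in $n$, by the constant function $2$, which is trivially integrable for both $\mathbb P$ and every $\mathbb P^\epsilon$.

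For the annealed statement, Proposition~\ref{cvpsrange} gives ${\mathcal R}_n/n\to q$ $\mathbb P$-almost surely. Applying the Lebesgue dominated convergence theorem with dominating function $2$ yields $\mathbb E[{\mathcal R}_n/n]\to q$, that is $\mathbb E[{\mathcal R}_n]/n\to q$, which is precisely $\mathbb E[{\mathcal R}_n]\sim nq$ since $q>0$. For the quenched statement I would invoke the observation already made after Proposition~\ref{cvpsrange}: the $\mathbb P$-almost sure convergence ${\mathcal R}_n/n\to q$ entails, for $\p$-almost every environment $\epsilon$, the $\mathbb P^\epsilon$-almost sure convergence ${\mathcal R}_n/n\to q$ (by the disintegration $\mathbb P=\int \mathbb P^\epsilon\,d\p$, the event of convergence has full $\mathbb P^\epsilon$-probability for $\p$-almost every $\epsilon$). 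Fixing such an $\epsilon$, the bound ${\mathcal R}_n/n\le 2$ holds surely, hence $\mathbb P^\epsilon$-almost surely, so dominated convergence applies verbatim under $\mathbb P^\epsilon$ and gives $\mathbb E^\epsilon[{\mathcal R}_n]/n\to q$, i.e. $\mathbb E^\epsilon[{\mathcal R}_n]\sim nq$ for $\p$-almost every $\epsilon$.

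There is no substantial obstacle here, which is why the paper announces the result as a direct consequence: the only points deserving a word of justification are that the dominating constant is legitimate (immediate from ${\mathcal R}_n\le n+1$), that $q>0$ so the equivalence is non-trivial (from transience of $M$), and that the transfer from annealed to quenched almost sure convergence is exactly the remark following the proposition. Everything else reduces to the dominated convergence theorem, applied once under $\mathbb P$ and once under $\mathbb P^\epsilon$.
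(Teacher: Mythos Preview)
Your proof is correct and follows exactly the paper's approach: the authors simply note that ${\mathcal R}_n\le n+1$ and invoke the Lebesgue dominated convergence theorem. You have merely spelled out the details (the dominating constant, the passage from annealed to quenched almost sure convergence via the remark after Proposition~\ref{cvpsrange}, and the non-degeneracy $q>0$), all of which are routine.
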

This last result contradicts the result expected by Le Ny in \cite{LeNy}
for the behaviour of the quenched expectation.
The main  
difficulty of this model is that $M$ has stationary increments under the
annealed probability $\mathbb P$ and is a Markov chain under 
the quenched probability $\mathbb P^\epsilon$ for $\p$-almost every $\epsilon$
but $M$ is not a Markov chain with stationary increments 
(neither for $\mathbb P$ nor for $\mathbb P^\epsilon$).
This complicates seriously our study.
\begin{rk}
For $\p$-almost every $\epsilon$, $(M_n)_n$ is a transient
Markov chain with respect to $\mathbb P^\epsilon$, hence
$\mathbb P^\epsilon[M_j\ne 0,\ \forall j\geq 1]=1/\sum_{n\ge 0}\mathbb P^\varepsilon(M_n=0)$ 
and
$\mathbb P[M_j\ne 0,\ \forall j\geq 1]=\mathbb E\left[1/\sum_{n\ge 0}\mathbb P^\varepsilon(M_n=0)\right]$. 
\end{rk}

The Campanino and P\'etritis random walk is closely related to Random Walks in Random Scenery (RWRS). This fact was first noticed in \cite{GPN}. More precisely the first coordinate of the Campanino and P\'etritis random walk can be viewed as a generalized RWRS, the second coordinate being a lazy random walk on $\mathbb Z$ (see Section 5 of \cite{BFFN1} for the details). The main point is that the range of the first coordinate of the Campanino and P\'etritis random walk can easily be deduced from the following results about the range of random walks in random scenery. Let us recall the definition of the RWRS.
Let $\xi:=(\xi_y,y\in \ZZ)$ and $X:=(X_k,k\ge 1)$ 
be two independent sequences of independent
identically distributed random variables taking their values in $\ZZ$. 
The sequence $\xi$ is called the {\it random scenery}. 
The sequence $X$ is the sequence of increments of the {\it random walk}  
$(S_n, n \geq 0)$
defined by $S_0:=0$ and  $S_n:=\sum_{i=1}^{n}X_i$, for $n\ge 1$. 
The {\it random walk in random scenery} (RWRS) $Z$ is 
then defined by
$$Z_0:=0\ \mbox{and}\ \forall n\ge 1,\ Z_n:=\sum_{k=1}^{n}\xi_{S_k}.$$
Denoting by  
$N_n(y)$ the local time of the random walk $S$~:
$$N_n(y):=\#\{k=1,...,n\ :\ S_k=y\} \, ,
$$
it is straightforward to see that 
$Z_n$ can be rewritten as $Z_n=\sum_y\xi_y N_n(y)$.

As in \cite{KS}, the distribution of $\xi_0$ is assumed to belong to the normal 
domain of attraction of a strictly stable distribution 
$\mathcal{S}_{\beta}$ of 
index $\beta\in (0,2]$, with characteristic function $\phi$ given by
$$\phi(u)=e^{-|u|^\beta(A_1+iA_2 \text{sgn}(u))}\quad u\in\mathbb{R},$$
where $0<A_1<\infty$ and $|A_1^{-1}A_2|\le |\tan (\pi\beta/2)|$.
When $\beta > 1$, this implies that $\EE[\xi_0] = 0$.
When $\beta = 1$, we assume
the symmetry condition
$\sup_{ t > 0} \va{\EE\cro{\xi_0 \ind_{\{\va{\xi_0} \le t\}}}} < +\infty \, $.\\
Concerning the random walk, the distribution of $X_1$ is
assumed to belong to the normal basin of attraction of a stable
distribution ${\mathcal S}'_{\alpha}$ with index $\alpha\in (0,2]$, with characteristic function $\psi$ given by
$$\psi(u)=e^{-|u|^\alpha(C_1+iC_2 \text{sgn}(u))}\quad u\in\mathbb{R},$$
where $0<C_1<\infty$ and $|C_1^{-1}C_2|\le |\tan (\pi\alpha/2)|$. In the particular case where $\alpha =1$, we assume that $C_2=0$. Moreover we assume that the additive group $\mathbb Z$ is generated by the support of the distribution of $X_1$.

\noindent Then the following weak convergences hold in the space  of 
c\`adl\`ag real-valued functions 
defined on $[0,\infty)$ endowed with the 
Skorohod $J_1$-topology~:
$$\left(n^{-\frac{1}{\alpha}} S_{\lfloor nt\rfloor}\right)_{t\geq 0}   
\mathop{\Longrightarrow}_{n\rightarrow\infty}
^{\mathcal{L}} \left(Y(t)\right)_{t\geq 0}\, ,$$
$$   \left(n^{-\frac{1}{\beta}} 
\sum_{k=0}^{\lfloor nx\rfloor}\xi_{k}\right)_{x\ge 0}
   \mathop{\Longrightarrow}_{n\rightarrow\infty}^{\mathcal{L}} 
\left(U(x)\right)_{x\ge 0}\quad\mbox{and}\quad
\left(n^{-\frac{1}{\beta}} 
\sum_{k=\lfloor -nx\rfloor}^{1}\xi_{k}\right)_{x\ge 0}
   \mathop{\Longrightarrow}_{n\rightarrow\infty}^{\mathcal{L}} 
\left(U(-x)\right)_{x\ge 0}$$
where $(U(x))_{x\ge 0}$,  $(U(-x))_{x\ge 0}$ and $(Y(t))_{t\ge 0}$ are three independent L\'evy processes such 
that $U(0)=0$, $Y(0)=0$, 
$Y(1)$ has distribution $\mathcal{S}'_{\alpha}$, $U(1)$ and $U(-1)$ 
have distribution  $\mathcal{S}_\beta$.
We will denote by $(L_t(x))_{x\in\mathbb{R},t\geq 0}$ a continuous version with compact support of the local time of the process $(Y(t))_{t\geq 0}$. Let us define
 $$\delta := 1-\frac{1}{\alpha}+ \frac{1}{\alpha \beta}.$$
In the case $\alpha\in (1,2]$ and $\beta\in (0,2]$, Kesten and Spitzer \cite{KS} proved 
the convergence in distribution of $(n^{-\delta} Z_{[nt]})_{t\ge 0}, n\geq 1$ (with respect to the $J_1$-metric), 
to a process $\Delta=(\Delta_t)_{t\geq 0}$ defined in this case by
$$\Delta_t : = \int_{\RR} L_t(x) \, d U(x).$$
This process $\Delta$ is called Kesten-Spitzer process in the literature.

When $\alpha\in(0,1)$ (when the random walk $S$ is transient) and $\beta\in(0,2]\setminus \{1\} $,
$(n^{-\frac 1\beta} Z_{[nt]})_{t\ge 0}, n\geq 1$ converges in distribution (with respect to the $M_1$-metric), 
to $(\Delta_t:=c_0U_t)_{t\geq 0}$ for some $c_0>0$.

When $\alpha=1$ and $\beta\in(0,2]\setminus \{1\} $,
$(n^{-\frac 1\beta}(\log n)^{\frac 1\beta-1} Z_{[nt]})_{t\ge 0}, n\geq 1$ converges in distribution (with respect to the $M_1$-metric), 
to $(\Delta_t:=c_1U_t)_{t\geq 0}$ for some $c_1>0$.

Hence in any of the cases considered above, $(Z_{\lfloor nt\rfloor}/a_n)_{t\ge 0}$ converges in distribution (with respect to the
$M_1$-metric) to some process $\Delta$, with
$$
a_n:=\left\{ \begin{array}{lll}
n^{1- \frac{1}{\alpha} +\frac{1}{\alpha\beta}} & \text{if} & \alpha\in (1,2]\\
n^{\frac{1}{\beta}} (\log n)^{1-\frac{1}{\beta}} & \text{if} & \alpha =1\\
n^{\frac{1}{\beta}} & \text{if} & \alpha \in (0,1).
\end{array}
\right.
$$
We are interested in the asymptotic behaviour of the range ${\mathcal R}_n$ of the RWRS $Z$, i.e.
of the number of sites visited by $Z$ before time $n$: 
$${\mathcal R}_n:=\#\{Z_0,\dots, Z_{n}\}.$$
In the case when the RWRS is transient, we use the same argument as for $(M_n)_n$ and obtain the same kind of result.
\begin{prop}\label{cvpsrange2}
Let $\alpha\in (0,2]$ and $\beta\in (0,1)$. Then, $({\mathcal R}_n/n)_n$ converges $\mathbb P$-almost surely to $\mathbb P[Z_j\ne 0,\ \forall j\geq 1]$.
\end{prop}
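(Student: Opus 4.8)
The plan is to follow the proof of Proposition~\ref{cvpsrange}, replacing the (quenched) Markov structure of $M$ by the stationary, \emph{ergodic} increment structure of the random walk in random scenery under the annealed measure $\mathbb P$. Concretely, on the product space carrying $(\xi,X)$ I would introduce the skew transformation
$$\theta(\xi,X):=\big(\tau_{X_1}\xi,\ \sigma X\big),$$
where $\tau_a$ denotes the spatial shift $(\tau_a\xi)_y:=\xi_{y+a}$ of the scenery and $\sigma$ the time shift $(\sigma X)_k:=X_{k+1}$ of the increments. Since $\xi$ and $X$ are independent with i.i.d. entries, a direct computation gives $\theta_*\mathbb P=\mathbb P$; moreover the single increment $Z_1=\xi_{S_1}$ satisfies $Z_1\circ\theta^{k}=\xi_{S_{k+1}}=Z_{k+1}-Z_k$, so that $Z_n=\sum_{k=0}^{n-1}Z_1\circ\theta^{k}$ and, crucially, the no-return functionals used below become iterates of fixed functions under $\theta$.

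Next I would use the last-visit decomposition of the range. Set $g_m:=\ind_{\{Z_j\ne0,\ 1\le j\le m\}}$ and $g:=\ind_{\{Z_j\ne0,\ \forall j\ge1\}}=\lim_m g_m$, so that $g_m\circ\theta^{k}=\ind_{\{Z_{k+j}\ne Z_k,\ 1\le j\le m\}}$. Counting every site of the range at its last visit time $\le n$ yields, for each fixed $m\ge1$,
$$\sum_{k=0}^{n}g\circ\theta^{k}\ \le\ {\mathcal R}_n\ \le\ \sum_{k=0}^{n}g_m\circ\theta^{k}+m .$$
Here the lower bound holds because the sites that are never revisited are pairwise distinct and belong to the range, and the upper bound holds because a last-visit time $k\le n-m$ forces $g_m\circ\theta^{k}=1$, the remaining at most $m$ indices being bounded by $1$.

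The decisive step is the \emph{ergodicity} of $\theta$ with respect to $\mathbb P$ (this is the random walk in random scenery, or $T,T^{-1}$-type, dynamical system). Granting it, Birkhoff's ergodic theorem makes both Birkhoff averages deterministic, $\frac1n\sum_{k=0}^{n}g\circ\theta^{k}\to\mathbb E[g]=\mathbb P[Z_j\ne0,\forall j\ge1]$ and $\frac1n\sum_{k=0}^{n}g_m\circ\theta^{k}\to\mathbb E[g_m]$; dividing the two-sided bound by $n$, letting $n\to\infty$ and then $m\to\infty$ (with $g_m\downarrow g$ and dominated convergence, so that $\inf_m\mathbb E[g_m]=\mathbb E[g]$) squeezes ${\mathcal R}_n/n$ to $\mathbb P[Z_j\ne0,\forall j\ge1]$. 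I expect the main obstacle to be exactly this ergodicity---equivalently, the $\mathbb P$-triviality of the $\theta$-invariant $\sigma$-field---so that the limits are the constant expectations rather than invariant random variables; it is here that the i.i.d. structure of the scenery and of the walk increments must be exploited, as in the treatment of $M$. The hypothesis $\beta\in(0,1)$ is used only to ensure that $Z$ is transient, hence that the limit $\mathbb P[Z_j\ne0,\forall j\ge1]$ is strictly positive and ${\mathcal R}_n$ grows linearly; the convergence itself does not otherwise require it.
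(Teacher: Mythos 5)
Your proof is correct, but it takes a genuinely different route from the paper's. The paper reduces Proposition \ref{cvpsrange2} to its general Proposition \ref{proptransient}, whose hypothesis is the quantitative decay $\mathbb P(Z_n=0)=O(n^{-\theta})$ for some $\theta>1$; this is supplied by the local limit theorems of \cite{BFFN1} (Theorems 1 and 2) and \cite{FFN} (Theorem 3), and this is precisely where $\beta\in(0,1)$ enters (it makes $a_n$ grow polynomially faster than $n$). The proof of Proposition \ref{proptransient} uses only the one-sided comparison $\mathcal R_n\ge \mathcal R'_n:=1+\sum_{k=0}^{n-1}\mathbf 1_{\{Z_{k+j}\ne Z_k,\ \forall j\ge 1\}}$ --- your lower bound --- and then controls $\Vert \mathcal R_n-\mathcal R'_n\Vert_1$ by summed return probabilities, upgrading $L^1$ convergence to almost sure convergence via Borel--Cantelli along the subsequence $k^\gamma$ and the monotonicity of $(\mathcal R_n)_n$. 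Your two-sided sandwich with the truncated indicators $g_m$ (the classical Kesten--Spitzer--Whitman argument) replaces all of this quantitative input by a second application of Birkhoff's theorem followed by $m\to\infty$: it needs no local limit theorem and no transience, and indeed proves the convergence for every $\alpha$ and $\beta$, the hypothesis $\beta\in(0,1)$ serving only to make the limit positive, exactly as you observe. The one nontrivial ingredient you share with the paper is the ergodicity of the scenery--walk skew product (your $\theta$ is the paper's $T$ up to notation and one-sided versus two-sided indexing of the increments), which you correctly flag as the crux but do not prove; the paper does not prove it either, citing \cite{KMC}, p.~162, so your argument is complete once you invoke the same reference. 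What the paper's heavier route buys is a single statement (Proposition \ref{proptransient}) reusable verbatim for the Campanino--P\'etritis walk in Proposition \ref{cvpsrange}, together with an explicit $L^1$ bound on $\mathcal R_n-\mathcal R'_n$; yours is the more elementary and more general proof of the almost sure statement itself.
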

For recurrent random walks in random scenery, we distinguish the 
easiest case when $\xi_1$ takes its values in $\{-1,0,1\}$. In that case, $\beta=2$, $U$ is the standard real Brownian motion,
$$
a_n=\left\{ \begin{array}{lll}
n^{1- \frac{1}{2\alpha}} & \text{if} & \alpha\in (1,2]\\
\sqrt{n \log n} & \text{if} & \alpha =1\\
\sqrt{n} & \text{if} & \alpha \in (0,1)
\end{array}
\right.
$$
and the limiting process $\Delta$ is either the Kesten-Spitzer process (case $\alpha\in (1,2])$ or the real Brownian motion (case $\alpha\in(0,1]$). Remark that in any case the limiting process
is symmetric.
\begin{prop}\label{cvpsrange3}
If $\alpha\in(0,2]$ and if $\xi_1$ takes its values
in $\{-1,0,1\}$. Then
$$\frac{\mathcal R_n}{a_n}=\frac{\sup_{t\in[0,1]}Z_{\lfloor nt\rfloor}-\inf_{t\in[0,1]}Z_{\lfloor nt\rfloor}+1}{a_n} \stackrel{\mathcal L}{\longrightarrow} \sup_{t\in[0,1]}\Delta_t-\inf_{t\in[0,1]}\Delta_t$$
and
$$\lim_{n\rightarrow+\infty} \frac{\mathbb{E}[\mathcal R_n]}{a_n}=2\, \mathbb E\left[\sup_{t\in[0,1]}\Delta_t\right].$$
\end{prop}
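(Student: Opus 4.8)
The plan is to begin with the elementary but decisive observation that makes the range of $Z$ completely explicit, and then to combine a continuous-mapping argument (for the convergence in law) with a uniform-integrability estimate (for the convergence of the means). First I would prove the first displayed equality. Since $\xi_1$ takes its values in $\{-1,0,1\}$, every increment of $Z$ satisfies $Z_k-Z_{k-1}=\xi_{S_k}\in\{-1,0,1\}$. A $\mathbb{Z}$-valued path whose increments belong to $\{-1,0,1\}$ visits exactly every integer lying between its running minimum and its running maximum, so that
$$\{Z_0,\dots,Z_n\}=\{m\in\mathbb{Z}:\ \min_{0\le k\le n}Z_k\le m\le \max_{0\le k\le n}Z_k\}.$$
Hence $\mathcal{R}_n=\max_{0\le k\le n}Z_k-\min_{0\le k\le n}Z_k+1$, and since $\lfloor nt\rfloor$ runs through $\{0,\dots,n\}$ as $t$ runs through $[0,1]$, this is precisely the claimed identity.

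For the convergence in law I would invoke the functional convergence $(Z_{\lfloor nt\rfloor}/a_n)_{t\ge 0}\stackrel{\mathcal L}{\longrightarrow}\Delta$ in the $M_1$-topology recalled above, together with the fact that the supremum and infimum functionals $x\mapsto \sup_{[0,1]}x$ and $x\mapsto\inf_{[0,1]}x$ are continuous for the $M_1$-topology (see Whitt). Because $\Delta$ is built from L\'evy processes it almost surely has no fixed-time discontinuity at $t=1$, so these functionals are a.s. continuous at $\Delta$, and the continuous mapping theorem yields the joint convergence
$$\left(\frac{\max_{0\le k\le n}Z_k}{a_n},\ \frac{\min_{0\le k\le n}Z_k}{a_n}\right)\stackrel{\mathcal L}{\longrightarrow}\left(\sup_{t\in[0,1]}\Delta_t,\ \inf_{t\in[0,1]}\Delta_t\right).$$
Taking the difference and using $1/a_n\to0$ gives the first assertion.

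To pass to the expectations I would upgrade this to convergence of first moments by establishing uniform integrability of $(\mathcal{R}_n/a_n)_n$. Since $\mathcal{R}_n\le 2\max_{0\le k\le n}|Z_k|+1$, it is enough to bound $\sup_n a_n^{-2p}\,\mathbb{E}[\max_{0\le k\le n}|Z_k|^{2p}]$ for a suitable integer $p\ge 2$. Conditioning on the walk $S$, the $(\xi_y)_y$ are i.i.d., bounded and centered, and $Z_n-Z_m=\sum_y\xi_y\,(N_n(y)-N_m(y))$, so Rosenthal's inequality gives $\mathbb{E}[|Z_n-Z_m|^{2p}]\le C_p\,\mathbb{E}\big[(\sum_y N_{n-m}(y)^2)^p\big]$, reducing the problem to moments of the self-intersection local time of $S$. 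Standard estimates provide $\mathbb{E}[(\sum_y N_\ell(y)^2)^p]\le C_p\,g(\ell)^p$ with $g(\ell)=\mathbb{E}[\sum_y N_\ell(y)^2]$ of order $a_\ell^2$ in each of the three regimes (of order $\ell^{2-1/\alpha}$, $\ell\log\ell$ and $\ell$ respectively). Feeding this into the M\'oricz--Billingsley maximal inequality, applicable because the effective exponent $p$ is strictly larger than $1$, yields $\mathbb{E}[\max_{0\le k\le n}|Z_k|^{2p}]\le C_p'\,a_n^{2p}$, hence $L^{2p}$-boundedness of $\mathcal{R}_n/a_n$ and the required uniform integrability. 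Together with the convergence in law this gives $\mathbb{E}[\mathcal{R}_n]/a_n\to\mathbb{E}[\sup_{[0,1]}\Delta-\inf_{[0,1]}\Delta]$, and the factor $2$ follows from the symmetry of $\Delta$: since $\Delta\stackrel{\mathcal L}{=}-\Delta$ one has $\sup_{[0,1]}\Delta\stackrel{\mathcal L}{=}-\inf_{[0,1]}\Delta$, whence $\mathbb{E}[\sup_{[0,1]}\Delta-\inf_{[0,1]}\Delta]=2\,\mathbb{E}[\sup_{[0,1]}\Delta]$.

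The main obstacle is the uniform-integrability step, and within it the boundary regimes $\alpha=1$ and $\alpha\in(0,1)$. There the self-intersection local time has order $\ell\log\ell$ or $\ell$, so at the level of second moments the maximal inequality degenerates to the critical exponent $1$ (producing a spurious logarithmic factor). The device of passing to high moments $p$ is what restores a strict exponent $p>1$ and lets the maximal inequality absorb the logarithmic correction into $a_n^{2p}$; consequently, controlling the high moments of the self-intersection local time uniformly across the three regimes is the real work behind the statement.
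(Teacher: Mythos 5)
Your proposal is correct, and on two of the three ingredients it matches the paper exactly: the identity $\mathcal R_n=\max_{0\le k\le n}Z_k-\min_{0\le k\le n}Z_k+1$ (which the paper states without proof and you rightly justify from the increments lying in $\{-1,0,1\}$), and the convergence in law via $M_1$-continuity of the sup/inf functionals (Whitt, Section 12.3) together with the symmetry of $\Delta$ to produce the factor $2$. Where you genuinely diverge is the uniform-integrability step. Since $\xi_1\in\{-1,0,1\}$ forces $\beta=2$, the paper stays at the $L^2$ level: conditionally on $S$ the increments $\xi_{S_k}$ are centered and positively associated, so $(Z_k)_k$ is a demimartingale given $S$, and the maximal inequality of \cite{Gong} yields $\mathbb E\bigl[\max_{j\le n}|Z_j|^2\bigr]\le c_2\,\mathbb E[\xi_0^2]\,\mathbb E[V_n]$ with $\mathbb E[V_n]\sim c'a_n^2$ in all three regimes $\alpha\in(1,2]$, $\alpha=1$, $\alpha\in(0,1)$; only a \emph{first}-moment bound on the self-intersection local time is needed, and the boundary regimes cause no degeneration because a demimartingale maximal inequality, unlike M\'oricz--Billingsley, requires no exponent strictly above $1$. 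Your route (conditional Rosenthal plus M\'oricz--Billingsley at power $2p$, $p\ge2$) is viable and correctly diagnoses why $p=1$ fails for $\alpha\le1$ \emph{within that scheme}, but it purchases the maximal inequality at the cost of the high-moment bounds $\mathbb E[V_\ell^p]=O(a_\ell^{2p})$, which you assert as ``standard'' without proof --- this is the one point you must substantiate, since it is the real load-bearing step of your argument. It is in fact true and short: writing $V_\ell\le \ell+2\sum_{i=1}^{\ell}G_i$, where $G_i$ counts the returns to $S_i$ after time $i$ and is stochastically dominated by the local time at $0$ of a fresh walk, Minkowski reduces everything to $\|N_\ell(0)\|_p$, which is $O(\ell^{1-1/\alpha})$, $O(\log\ell)$ and $O(1)$ in the three regimes by the Markov-property bound $\mathbb E[N_\ell(0)^p]\le p!\,(1+\sup_x\mathbb E^x[N_\ell(0)])^p$. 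Note also that, as your scheme implicitly requires, $(Z_k)_k$ is \emph{not} a conditional martingale given $S$ (a revisited site repeats the same $\xi$), so some non-martingale maximal inequality is unavoidable; both choices are legitimate, but the paper's positive-association observation makes the entire high-moment apparatus unnecessary, which is what that trick buys.
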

We also study the asymptotic behaviour of the range of the first coordinate of the Campanino and P\'etritis random walk. 
Let $\mathcal R_n^{(1)}$ be the number of vertical lines visited by
$(M_k)_k$ up to time $n$,
i.e.
$$\mathcal R_n^{(1)}:=\#\{x\in\mathbb Z\ :\ \exists k=0,...,n,\ \exists y\in\mathbb Z\ :\ M_k=(x,y)\}.$$
Let us recall that it has been shown in \cite{GPN} that the first coordinate
of $M_{\lfloor nt\rfloor}$ normalized by $n^{\frac 34}$ converges in
distribution to $K_p\Delta_t^{(0)}$, where $K_p:=\frac p{(1-p)^{\frac 14}}$ and where $\Delta^{(0)}$ is the Kesten-Spitzer process $\Delta$ with $U$ and $Y$ two independent standard Brownian motions.
\begin{prop}[Range of the first coordinate of the Campanino and P\'etritis random walk]\label{cvpsrange3bis}
$(\mathcal R_n^{(1)}/n^{\frac 34})_n$ converges in distribution
to $K_p\left(\sup_{t\in[0,1]}\Delta_t^{(0)}-\inf_{t\in[0,1]}\Delta_t^{(0)}
\right).$
Moreover
$$\lim_{n\rightarrow+\infty} \frac{\mathbb{E}[\mathcal R_n^{(1)}]}{n^{\frac 34}}=2K_p\, \mathbb E\left[\sup_{t\in[0,1]}\Delta^{(0)}_t\right].$$
\end{prop}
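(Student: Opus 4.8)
The plan is to reduce the statement to Proposition \ref{cvpsrange3} by realising the first coordinate $(M_k^{(1)})_k$ of $M$ as a (generalized) random walk in random scenery, and then to follow the three–step scheme of that proposition: a deterministic identity for the range, a continuous–mapping argument, and a uniform integrability estimate for the mean.

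First I would record the key deterministic fact. At each step $M$ either performs a horizontal move, in which case $M^{(1)}$ changes by $\epsilon_y=\pm1$, or a vertical move, in which case $M^{(1)}$ is unchanged; hence the increments of $(M_k^{(1)})_k$ belong to $\{-1,0,1\}$. An integer–valued path starting at $0$ with such increments visits every integer lying between its running minimum and its running maximum, so
$$\mathcal R_n^{(1)}=\sup_{0\le k\le n}M_k^{(1)}-\inf_{0\le k\le n}M_k^{(1)}+1=\sup_{t\in[0,1]}M_{\lfloor nt\rfloor}^{(1)}-\inf_{t\in[0,1]}M_{\lfloor nt\rfloor}^{(1)}+1 .$$
This is exactly the structure exploited in Proposition \ref{cvpsrange3}: the scenery $\epsilon\in\{\pm1\}\subseteq\{-1,0,1\}$ (so $\beta=2$ and $U$ Brownian) and the vertical lazy walk lies in the normal domain of attraction of Brownian motion ($\alpha=2$), which places us in the Kesten--Spitzer regime with $a_n=n^{1-\frac1{2\alpha}}=n^{3/4}$ and limit $\Delta=K_p\Delta^{(0)}$.

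Next I would pass to the limit. By the functional limit theorem of \cite{GPN}, $(n^{-3/4}M_{\lfloor nt\rfloor}^{(1)})_{t\ge0}$ converges in distribution to $(K_p\Delta_t^{(0)})_{t\ge0}$; since $\Delta^{(0)}$ has continuous sample paths, this Skorohod convergence to a continuous limit is locally uniform, and the functionals $f\mapsto\sup_{[0,1]}f$ and $f\mapsto\inf_{[0,1]}f$ are continuous at continuous paths. The continuous mapping theorem then yields the first displayed convergence, the additive $+1$ being negligible after division by $n^{3/4}$. For the constant in the mean I would use that $\Delta^{(0)}$ is symmetric (replacing the Brownian motion $U$ by $-U$, independent of the local time $L$ of $Y$, turns $\Delta^{(0)}$ into $-\Delta^{(0)}$), so that $(\sup_{[0,1]}\Delta^{(0)},\inf_{[0,1]}\Delta^{(0)})\stackrel{\mathcal L}{=}(-\inf_{[0,1]}\Delta^{(0)},-\sup_{[0,1]}\Delta^{(0)})$ and therefore $\mathbb E[\sup_{[0,1]}\Delta^{(0)}-\inf_{[0,1]}\Delta^{(0)}]=2\,\mathbb E[\sup_{[0,1]}\Delta^{(0)}]$, which accounts for the factor $2K_p$.

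The hard part will be upgrading the convergence in distribution to convergence of the normalized expectations, i.e. establishing uniform integrability of $(n^{-3/4}\mathcal R_n^{(1)})_n$. Since $\mathcal R_n^{(1)}\le 2\max_{0\le k\le n}|M_k^{(1)}|+1$, it suffices to bound $\sup_n\mathbb E\big[(n^{-3/4}\max_{k\le n}|M_k^{(1)}|)^{q}\big]$ for some $q>1$. Writing $M_n^{(1)}=\sum_y\epsilon_y N_n(y)$, where $N_n(y)$ is the number of horizontal steps performed on line $y$ before time $n$, I would condition on the vertical walk and the horizontal/vertical pattern and apply a Rosenthal inequality for the independent centered bounded variables $\epsilon_y$; this controls $\mathbb E[|M_n^{(1)}|^{q}\mid\text{walk}]$ by $(\sum_y N_n(y)^2)^{q/2}+\sum_y N_n(y)^{q}$, after which moment bounds for the self–intersection local time $\sum_y N_n(y)^2\asymp n^{3/2}$ of the vertical walk give $\mathbb E[|M_n^{(1)}|^{q}]\lesssim n^{3q/4}$ for $q>2$. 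Passing from $|M_n^{(1)}|$ to the maximum over $k\le n$ requires a maximal inequality, and I expect this to be the main technical obstacle: $(M_k^{(1)})_k$ is neither a martingale nor a Markov chain with stationary increments under the annealed measure (once a line has been visited its sign is known, so later horizontal steps on it contribute a nonzero conditional drift), so the maximal bound must be obtained by decomposing off the martingale part coming from freshly discovered lines, or directly from moment bounds on the self–intersection structure.
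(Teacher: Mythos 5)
Your first two steps coincide with the paper's own proof: the identity $\mathcal R_n^{(1)}=\max_{0\le k\le n}M_k^{(1)}-\min_{0\le k\le n}M_k^{(1)}+1$ (increments in $\{-1,0,1\}$), the continuous-mapping argument applied to the functional limit theorem of \cite{GPN}, and the symmetry of $\Delta^{(0)}$ producing the factor $2$ are all exactly as in the paper. The gap sits precisely where you flag it yourself: you never establish the maximal inequality needed for uniform integrability, and the two strategies you gesture at (``decomposing off the martingale part coming from freshly discovered lines, or directly from moment bounds on the self-intersection structure'') are not carried out. As written, the proposal proves the convergence in distribution and an endpoint moment bound $\mathbb E[|M_n^{(1)}|^q]=O(n^{3q/4})$, but not the convergence of $\mathbb{E}[\mathcal R_n^{(1)}]/n^{3/4}$, which is half of the statement.

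The missing idea, which is the whole content of the paper's Lemma \ref{LEM0bis}, is that the obstruction you correctly identify under the annealed law --- once a line is visited its sign is known, so there is a conditional drift and no martingale structure --- dissolves after conditioning on the second coordinate $S$ (including the laziness pattern): given $S$, one has $M_k^{(1)}=\sum_y\varepsilon_y\tilde N_k(y)$ with $\tilde N_k(y)$ deterministic, nonnegative and nondecreasing in $k$, so the increments $\varepsilon_{S_k}\ind_{\{S_k=S_{k-1}\}}$ are centered and positively associated, i.e. $(M_k^{(1)})_k$ is conditionally a demimartingale. Gong's maximal inequality (Theorem 2.1 of \cite{Gong}, of Newman--Wright type) then gives $\mathbb E\big[\max_{k\le n}|M_k^{(1)}|^2\,\big|\,S\big]\le c_2\sum_y(\tilde N_n(y))^2\le c_2 V_n$, and $\mathbb E[V_n]\sim c' n^{3/2}$ yields $L^2$-boundedness of $(n^{-3/4}\max_{k\le n}|M_k^{(1)}|)_n$; note that no Rosenthal inequality or $q>2$ is needed, since the conditional variance is exactly $\sum_y(\tilde N_n(y))^2$ for bounded signs. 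For completeness, your alternative route could in fact be closed without association: under the annealed law the increments of $M$ are stationary, so your Rosenthal bound gives $\mathbb E[|M_k^{(1)}-M_j^{(1)}|^q]=\mathbb E[|M_{k-j}^{(1)}|^q]=O((k-j)^{3q/4})$ for $q>2$, and since $3q/4>1$ a M\'oricz--Billingsley-type moment maximal inequality would bound $\mathbb E[\max_{k\le n}|M_k^{(1)}|^q]$ by $O(n^{3q/4})$; but no such argument appears in your write-up, so the uniform-integrability step remains a genuine gap.
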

Since the second coordinate of the Campanino and P\'etritis random walk is a true random walk, the asymptotic behaviour of its range
is well known \cite{LGR}.\\
The range of RWRS in the general case $\beta\in(1,2]$ is much more
delicate. Indeed, the fact that $\mathcal R_n$  is less than $\sup_{t\in[0,1]}Z_{\lfloor nt\rfloor}-\inf_{s\in[0,1]}Z_{\lfloor ns\rfloor}+1$ will only provide an upper bound; we use a separate argument to obtain the lower bound insuring that $\mathcal R_n$ has order $a_n$.
\begin{prop}\label{cvpsrange4}
Let $\alpha\in(0,2]$ and $\beta\in (1,2]$. Then
$$0<\liminf_{n\rightarrow +\infty}\frac{\mathbb E[\mathcal R_n]}{a_n}\leq \limsup_{n\rightarrow +\infty}\frac{\mathbb E[\mathcal R_n]}{a_n}<\infty.$$
\end{prop}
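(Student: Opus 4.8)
The plan is to prove the two inequalities by completely different means: the upper bound will follow at once from the oscillation inequality recalled just before the statement, while the lower bound is the genuinely new ingredient and will come from a second-moment (self-intersection) computation.

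For the upper bound I would start from
$$\mathcal R_n\le \sup_{0\le k\le n}Z_k-\inf_{0\le k\le n}Z_k+1\le 2\max_{0\le k\le n}|Z_k|+1,$$
which reduces the matter to a maximal moment estimate of the form $\mathbb E\big[\max_{0\le k\le n}|Z_k|\big]\le C\,a_n$. Since $\beta>1$ the scenery has finite moments of every order $r\in(1,\beta)$, so that the single-time bound $\mathbb E\big[|Z_m|^r\big]\le C\,a_m^r$ (consistent with the convergence in law of $Z_{\lfloor n\cdot\rfloor}/a_n$ to $\Delta$) holds; combined with a maximal inequality it yields $\mathbb E\big[\max_{0\le k\le n}|Z_k|^r\big]\le C\,a_n^r$ and hence $\limsup_n\mathbb E[\mathcal R_n]/a_n<\infty$. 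The boundary and transient regimes $\alpha\le 1$ require some care in the choice of maximal inequality, but raise no essential difficulty.

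The lower bound is the heart of the matter. I would first record the trivial identity $\sum_z\#\{0\le k\le n:Z_k=z\}=n+1$ and, setting $V_n:=\sum_z\big(\#\{0\le k\le n:Z_k=z\}\big)^2$ for the self-intersection local time of $Z$, apply the Cauchy--Schwarz inequality to the $\mathcal R_n$ non-vanishing terms to obtain $(n+1)^2\le \mathcal R_n\,V_n$, i.e. $\mathcal R_n\ge (n+1)^2/V_n$. Since $x\mapsto 1/x$ is convex, Jensen's inequality gives
$$\mathbb E[\mathcal R_n]\ \ge\ (n+1)^2\,\mathbb E\!\left[\frac1{V_n}\right]\ \ge\ \frac{(n+1)^2}{\mathbb E[V_n]},$$
so everything reduces to the estimate $\mathbb E[V_n]\le C\,n^2/a_n$. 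To compute $\mathbb E[V_n]=\sum_{i,j=0}^n\mathbb P(Z_i=Z_j)$ I would use the stationarity in law of the increments: conditioning on $S_i$ and translating both the walk $(S_{i+\cdot}-S_i)$ and the scenery $(\xi_{S_i+\cdot})$ shows that $Z_j-Z_i$ has the same law as $Z_{j-i}$, whence $\mathbb P(Z_i=Z_j)=\mathbb P(Z_{|j-i|}=0)$ and
$$\mathbb E[V_n]\le (n+1)+2\sum_{m=1}^n(n+1-m)\,\mathbb P(Z_m=0)\le C\,n\sum_{m=1}^n\mathbb P(Z_m=0).$$

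What remains, and what I expect to be the main obstacle, is the local limit upper bound $\mathbb P(Z_m=0)\le C/a_m$; granting it, the elementary sum $\sum_{m\le n}a_m^{-1}$ is of order $n/a_n$ in each of the three regimes (using $\delta<1$ when $\alpha\in(1,2]$ and $1/\beta<1$ when $\alpha\le1$), which gives exactly $\mathbb E[V_n]\le C\,n^2/a_n$ and closes the argument. To prove this local limit bound I would pass to Fourier: writing $\mathbb E_S$ for the expectation with respect to the walk $S$ only, the independence of the scenery from $S$ and the i.i.d. assumption give
$$\mathbb P(Z_m=0)=\frac1{2\pi}\int_{-\pi}^{\pi}\mathbb E_S\Big[\prod_{y}\phi\big(t\,N_m(y)\big)\Big]\,dt,$$
and the domain-of-attraction bound $|\phi(u)|\le e^{-c|u|^\beta}$ near $0$ (with $|\phi|$ bounded away from $1$ off the origin by aperiodicity) reduces the problem to controlling $\mathbb E_S\big[(V_m^{(\beta)})^{-1/\beta}\big]$, where $V_m^{(\beta)}:=\sum_y N_m(y)^\beta$ is the $\beta$-self-intersection local time of $S$. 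In the transient regime $\alpha<1$ this is immediate, since $V_m^{(\beta)}$ dominates the range of $S$, which is of order $m$ with an exponentially small lower tail; the delicate cases are $\alpha\in(1,2]$ and the logarithmically corrected case $\alpha=1$, where one needs a genuine lower-tail (negative moment) estimate ensuring $\mathbb E_S\big[(V_m^{(\beta)})^{-1/\beta}\big]\le C\,a_m^{-1}$, that is, that the local times of $S$ do not concentrate on too few sites. This negative-moment control of the self-intersection local time of the underlying walk is the technical core of the proof.
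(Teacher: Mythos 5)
Your proposal is correct and follows essentially the same route as the paper: the upper bound via $\mathcal R_n\le \max_{0\le k\le n}Z_k-\min_{0\le k\le n}Z_k+1$ together with a maximal-moment bound $\mathbb E\left[\max_{0\le k\le n}|Z_k|^{\beta'}\right]=O(a_n^{\beta'})$ for $\beta'\in(1,\beta)$ (which the paper obtains, exactly along the lines you leave open, from the positive association of the increments of $Z$ conditionally on $S$ via the demimartingale maximal inequality of \cite{Gong}, the von Bahr--Esseen identity \cite{VBE}, and estimates on $\mathbb E\left[\sum_y N_n(y)^\beta\right]$), and the lower bound via the identical Cauchy--Schwarz plus Jensen argument $\mathbb E[\mathcal R_n]\ge n^2/\mathbb E[\mathcal V_n]$ with $\mathcal V_n$ the self-intersection local time of $Z$. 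The one step you rightly flag as the technical core, the local limit bound $\mathbb P(Z_m=0)=O(1/a_m)$, is not reproved in the paper but quoted from the local limit theorems of \cite{BFFN1} and \cite{FFN}, which establish it by precisely the Fourier and negative-moment analysis of $\sum_y N_m(y)^\beta$ that you sketch.
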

We actually prove that $\limsup_{n\rightarrow +\infty}\frac{\mathbb E[\mathcal R_n]}{a_n}\le \mathbb E[\sup_{t\in[0,1]}\Delta_t-\inf_{t\in[0,1]}\Delta_t]$.
The question wether $\lim_{n\rightarrow +\infty}\frac{\mathbb E[\mathcal R_n]}{a_n}=\mathbb E[\sup_{t\in[0,1]}\Delta_t-\inf_{t\in[0,1]}\Delta_t]$ or not is still open.\\*
The paper is organized as follows. Section \ref{Transient case} provides the proof of Propositions \ref{cvpsrange} and \ref{cvpsrange2}.
Section \ref{Recurrent case} is devoted to the proof of Propositions \ref{cvpsrange3}, \ref{cvpsrange3bis} and \ref{cvpsrange4}. 
\section{Behaviour of the range in transient cases}\label{Transient case}
Let $(\Omega,\mu,T)$ be an ergodic probability dynamical system
and let $f:\Omega\rightarrow\mathbb Z^d$ be a measurable function.
We consider the process $(M_n)_{n\ge 0}$ defined by
$M_n=\sum_{k=0}^{n-1}f\circ T^k$ for $n\geq 1$ and $M_0=0$.
Now we assume that 
$\sum_{n\ge 0}\mathbb P(M_n=0)< +\infty$, so $(M_n)_n$ is transient. 
Let ${\mathcal R}_n$ be the range of $(M_n)_n$, that is
${\mathcal R}_n:=\#\{M_0,...,M_{n}\}$.
\begin{prop}\label{proptransient}
Assume that $\mathbb P(M_n=0)=O(n^{-\theta})$ for some
$\theta>1$. Then $\lim_{n\rightarrow +\infty}{\mathcal R}_n/n= \mu(M_j\ne 0,\ \forall j\ge 1)$, $\mu$-almost surely.
\end{prop}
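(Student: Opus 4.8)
The plan is to turn the count of distinct sites into an ergodic average by means of a \emph{last-exit} decomposition of the range, and then to apply Birkhoff's ergodic theorem. First I would count each visited site by the last time it is visited before time $n$: the site $M_k$ is counted exactly when $M_k\ne M_j$ for all $k<j\le n$. Since the increments are stationary, $M_j-M_k=M_{j-k}\circ T^k$ for $j\ge k$, so setting $g_m:=\mathbf{1}\{M_i\ne 0,\ 1\le i\le m\}$ (with $g_0\equiv 1$) one obtains the identity
$$\mathcal{R}_n=\sum_{k=0}^{n}g_{n-k}\circ T^k.$$
The functions $g_m$ decrease pointwise to $g_\infty:=\mathbf{1}\{M_i\ne 0,\ \forall i\ge 1\}$, and $\mu(g_\infty)=\mu(M_j\ne 0,\ \forall j\ge 1)$ is precisely the target constant.

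Next I would sandwich this sum between two genuine Birkhoff averages. For the lower bound, since $g_{n-k}\ge g_\infty$ for every $k$, we have $\mathcal{R}_n\ge\sum_{k=0}^{n}g_\infty\circ T^k$; ergodicity together with Birkhoff's theorem gives $\liminf_n \mathcal{R}_n/n\ge\mu(g_\infty)$ almost surely. For the upper bound, I would fix $m$ and split the sum at $k=n-m$: for $k\le n-m$ one has $n-k\ge m$, hence $g_{n-k}\le g_m$ by monotonicity, while the remaining $m$ terms are each $\le 1$. This yields $\mathcal{R}_n\le\sum_{k=0}^{n}g_m\circ T^k+m$, so $\limsup_n \mathcal{R}_n/n\le\mu(g_m)$ almost surely. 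Letting $m\to\infty$ along the integers and using the monotone convergence $\mu(g_m)\downarrow\mu(g_\infty)$ (a countable union of null sets remains null), the two bounds match and give $\lim_n \mathcal{R}_n/n=\mu(g_\infty)$ almost surely.

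The decay hypothesis enters by guaranteeing transience and, quantitatively, by bounding the discrepancy $\mu(g_m)-\mu(g_\infty)\le\sum_{i>m}\mathbb{P}(M_i=0)=O(m^{1-\theta})$. The qualitative convergence $\mu(g_m)\to\mu(g_\infty)$ that the sandwich actually requires already follows from monotone convergence alone, so the rate $\theta>1$ is not strictly essential to the almost sure statement; it controls the speed and, more importantly, it is the form of information that is available in the applications (Propositions \ref{cvpsrange} and \ref{cvpsrange2}) through the local limit theorems, and it is what feeds the accompanying estimates on $\mathbb{E}[\mathcal{R}_n]$.

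The main obstacle I anticipate is the correctness and book-keeping of the stationary last-exit representation rather than any probabilistic estimate. The point to verify carefully is that the dynamics enters only through forward iterates $T^k$ with $k\ge 0$, so that no invertibility of $T$ is needed, and that the $O(m)$ boundary terms in the upper bound are negligible after dividing by $n$ with $m$ fixed. Once this reorganization of the range as an ergodic average is in place, the remainder reduces to Birkhoff's theorem and an elementary monotone limit; the delicate step is purely combinatorial, namely the passage from counting distinct sites to the sum $\sum_{k=0}^n g_{n-k}\circ T^k$.
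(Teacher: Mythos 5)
Your proof is correct, but it takes a genuinely different route from the paper. Both start from the same last-exit decomposition $\mathcal{R}_n=\sum_{k=0}^{n}g_{n-k}\circ T^k$ (the paper writes it as $1+\sum_{k=0}^{n-1}\mathbf 1_{\{M_{k+j}\ne M_k,\,\forall j=1,\dots,n-k\}}$), but from there the paper compares $\mathcal{R}_n$ with the idealized range $\mathcal{R}'_n=1+\sum_{k=0}^{n-1}\mathbf 1_{\{M_j\ne 0,\,\forall j\ge 1\}}\circ T^k$, shows $\Vert \mathcal{R}_n-\mathcal{R}'_n\Vert_1=O(n^{2-\theta})$ (or $O(\log n)$, $O(1)$ according to $\theta$) using precisely the hypothesis $\mathbb{P}(M_n=0)=O(n^{-\theta})$ with $\theta>1$, and then obtains almost sure convergence via Borel--Cantelli along the polynomial subsequence $k^\gamma$ with $\gamma(\theta-1)>1$, interpolating by monotonicity of $n\mapsto\mathcal{R}_n$. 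You instead run the classical Kesten--Spitzer--Whitman sandwich: $\sum_{k=0}^{n}g_\infty\circ T^k\le\mathcal{R}_n\le\sum_{k=0}^{n}g_m\circ T^k+m$, apply Birkhoff's theorem to each fixed $g_m$ and to $g_\infty$, and let $m\to\infty$ using $\mu(g_m)\downarrow\mu(g_\infty)$. Your argument is more elementary and strictly more general: it needs only stationarity and ergodicity, and, as you correctly observe, the decay hypothesis $\theta>1$ plays no role in the almost sure statement itself (it is what the local limit theorems of the applications supply, and what the paper's $L^1$ route consumes). What the paper's approach buys in exchange is a quantitative by-product, namely the explicit $L^1$ rate for $\mathcal{R}_n-\mathcal{R}'_n$, which your qualitative monotone-limit step does not provide; neither proposition below requires that rate, so for the stated result your proof is arguably the cleaner one. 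Your book-keeping checks out: the identity $M_i\circ T^k=M_{k+i}-M_k$ holds because $M_n=\sum_{j=0}^{n-1}f\circ T^j$, only forward iterates of $T$ are used, the $k=n$ term is handled by $g_0\equiv 1$, and the $m$ boundary terms vanish after dividing by $n$.
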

\begin{proof}
It is worth noting that 
$${\mathcal R}_n=1+\sum_{k=0}^{n-1}\mathbf 1_{\{M_{k+j}\ne M_k,\, \forall j=1,...,n-k\}}.$$
Indeed $M_{k+j}\ne M_k,\, \forall j=1,...,n-k$ means that the site $M_k$ visited at time $k$
is not visited again before time $n$. We define now
$${\mathcal R}'_n:=1+\sum_{k= 0}^{n-1}1_{\{M_{k+j}-M_k\ne 0,\, \forall j\ge 1\}}.$$
We first prove the almost sure convergence of $({\mathcal R}'_n/n)_n$.
To this end, we observe that ${\mathcal R}'_n$ can be rewritten
$${\mathcal R}'_n=1+\sum_{k= 0}^{n-1}1_{\{M_{j}\ne 0,\, \forall j\ge 1\}}\circ T^k.$$
By ergodicity of $T$, $({\mathcal R}'_n/n)_n$ converges almost surely to $\mathbb P[M_j\ne 0,\ \forall j\geq 1]$.
\\
Now let us estimate ${\mathcal R}_n-{\mathcal R}'_n$.
We have
\begin{eqnarray*}
\Vert {\mathcal R}_n-{\mathcal R}'_n\Vert_1&=&\mathbb E[{\mathcal R}_n-{\mathcal R}'_n]\\
  &\le&\sum_{k= 0}^{n-1}\mathbb P(\exists j\ge n-k,\ M_{k+j}-M_k=0)\\
  &\le&\sum_{k= 0}^{n-1}\mathbb P(\exists j\ge n-k,\ M_{j}=0)\\
  &\le&\sum_{k= 1}^{n}\sum_{j\ge k}\mathbb P(M_{j}=0)\\
  &\le&\sum_{k= 1}^{n}\sum_{j\ge k} Cj^{-\theta}\\
  &=&\left\{\begin{array}{lll}
             O(n^{2-\theta}) & \text{ when } & 1<\theta <2\\
             O(\log n) &  \text{ when } & \theta = 2\\
             O(1) & \text{ when } & \theta > 2
            \end{array}
            \right.
\end{eqnarray*}
using the stationarity of the increments of $(M_n)_n$.
Hence, when $1<\theta <2$,
$\Vert ({\mathcal R}_n-{\mathcal R}' _n)/n\Vert_1=O(n^{1-\theta})$. Let $\gamma>0$
be such that $\gamma(\theta-1)>1$. 
Due to the Borel-Cantelli Lemma,
$(({\mathcal R}_{k^\gamma}-{\mathcal R}'_{k^\gamma})/k^\gamma)_k$ to 0,
and so $({\mathcal R}_{k^\gamma}/k^\gamma)_k$ 
converges almost surely to 
$\mathbb P[M_j\ne 0,\ \forall j\geq 1]$. To conclude, we use the increase of $({\mathcal R}_n)_n$
which gives that
$$\frac{{\mathcal R}_{\lfloor n^{\frac 1\gamma}\rfloor^\gamma}}{n}\le \frac{{\mathcal R}_n}{n}\le \frac{{\mathcal R}_{\lceil {n^{\frac 1\gamma}}\rceil^\gamma}}{n}.$$
We conclude by noticing that $\lceil n^{\frac 1\gamma}\rceil^\gamma\sim n$ and
$\lfloor n^{\frac 1\gamma}\rfloor^\gamma\sim n$. 
\\
The cases $\theta =2$ and $\theta >2$ can be handled in a similar way. 
\end{proof}
\begin{proof}[Proof of Proposition \ref{cvpsrange}]
Let us consider $\Omega:=\{-1,1\}^{\mathbb Z}\times
\{-1,0,1\}^{\mathbb Z}$ and the transformation $T$ on $\Omega$ given by 
$T((\epsilon_k)_k,(\omega_k)_k)=((\epsilon_{k+\omega_0})_k,
  (\omega_{k+1})_k)$. This transformation preserves
the probability measure $\mu:=(\frac{\delta_1+\delta_{-1}}2)^{\otimes\mathbb Z}\otimes(p\delta_0+\frac{1-p}2\delta_1+\frac{1-p}2\delta_{-1})^{\otimes\mathbb Z}$ and is ergodic (see for instance \cite{KMC}, p.162).
\\
We also set $f((\epsilon_k)_k,(\omega_k)_k)=(\epsilon_0,0)$
if $\omega_0 = 0$, $f((\epsilon_k)_k,(\omega_k)_k)=(0,\omega_0)$
otherwise.
\\
We observe that $(M_j)_{j\geq 1}$ has the same distribution under $\mathbb P$
as $(\sum_{k=0}^{j-1} f\circ T^j)_{j\geq 1}$ under $\mu$.
We conclude by Proposition \ref{proptransient} since we know from \cite{BFFN1} that
$\mathbb P(M_{n}=0)=O(n^{-\theta})$ with $\theta=5/4$.
\end{proof}
\begin{proof}[Proof of Proposition \ref{cvpsrange2}]
We consider $\Omega:=\mathbb Z^{\mathbb Z}\times
\mathbb Z^{\mathbb Z}$ and the transformation $T$ on $\Omega$ given by 
$T((\alpha_k)_k,(\epsilon_k)_k)=((\alpha_{k+1})_k,
  (\epsilon_{k+\alpha_0})_k)$. This transformation preserves
the probability measure $\mu:=(\mathbb P_{S_1})^{\otimes\mathbb Z}\otimes (\mathbb P_{\xi_1})^{\otimes\mathbb Z}$.
This time we set $f((\alpha_k)_k,(\epsilon_k)_k)=\epsilon_0$.
With these choices, $(Z_j)_{j\geq 1}$ has the same distribution under $\mathbb P$
as $(\sum_{k=1}^j f\circ T^j)_{j\geq 1}$ under $\mu$.
Again we conclude thanks to  Proposition \ref{proptransient}, to the
ergodicity of $T$ (see for instance \cite{KMC}, p.162) and to the local limit theorems established in \cite{BFFN1} (Theorems 1 and 2) and \cite{FFN} (Theorem 3).
\end{proof}
\section{Range of recurrent random walks in random scenery}\label{Recurrent case}
In this section we prove Propositions  \ref{cvpsrange3},  \ref{cvpsrange3bis} and \ref{cvpsrange4}.
We write $M_n^{(1)}$ for the first coordinate of the
Campanino and P\'etritis random walk $M_n$.
\\
For Propositions  \ref{cvpsrange3},  \ref{cvpsrange3bis},
we observe that 
${\mathcal R}_n= \max_{0\le k\le n}Z_k-\min_{0\le k\le n} Z_k +1$
and ${\mathcal R}_n^{(1)}= \max_{0\le k\le n}M_k^{(1)}-\min_{0\le k\le n} M_k^{(1)} +1$ whereas for 
Proposition \ref{cvpsrange4}, we only have
${\mathcal R}_n\le \max_{0\le k\le n}Z_k-\min_{0\le k\le n} Z_k +1$. Hence the convergence of the means in Propositions \ref{cvpsrange3} and  \ref{cvpsrange3bis} and the upper bound in Proposition \ref{cvpsrange4} will come from lemmas \ref{LEM0} and \ref{LEM0bis} below. Let us start by the convergence
in distribution.
\begin{proof}[Proof of the convergences in distribution]
Due to the convergence for the $M_1$-topology of $((a_n^{-1} Z_{\lfloor nt\rfloor})_t)_n$ to $(\Delta_t)_t$ as $n$ goes to infinity,
we know (see Section 12.3 in \cite{Whitt}) that $(a_n^{-1}(\max_{0\le k\le n}Z_k-\min_{0\le \ell\le n}Z_{\ell}))_n$ converges in distribution
to $\sup_{t\in[0,1]}\Delta_t-\inf_{s\in[0,1]}\Delta_s $
as $n$ goes to infinity.
\\
Due to \cite{GPN}, $((M^{(1)}_{\lfloor nt\rfloor}/n^{\frac 34})_t)_n$ converges in distribution to $(K_p\Delta_t^{(0)})_t$ in the Skorohod space endowed with the $J_1$-metric. 
Hence $(n^{-\frac 34}(\max_{k=0,...,n}M_k^{(1)}-\min_{\ell=0,...,n}M_\ell^{(1)}))_n$ converges in distribution
to $K_p(\sup_{t\in[0,1]}\Delta_t^{(0)}-\inf_{s\in[0,1]}\Delta_s^{(0)})$.
\end{proof} 
\begin{lem}[RWRS]\label{LEM0}
Assume $\beta>1$, then 
$$\lim_{n\rightarrow +\infty}\frac{\mathbb E\left[\max_{k=0,...,n}Z_k\right]}{a_n}=\mathbb E\left[\sup_{t\in[0,1]}\Delta_t \right].$$
\end{lem}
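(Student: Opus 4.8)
The plan is to upgrade the convergence in distribution of $a_n^{-1}\max_{0\le k\le n}Z_k$ to $\sup_{t\in[0,1]}\Delta_t$ —which follows from the functional limit theorem for $(a_n^{-1}Z_{\lfloor nt\rfloor})_t$ together with the continuity of the supremum functional for the $M_1$-topology, exactly as in the proof of the convergences in distribution carried out above— into the convergence of the first moments. Since $\max_{0\le k\le n}Z_k\ge Z_0=0$ and $\sup_{t\in[0,1]}\Delta_t$ has finite expectation, it suffices to show that the family $(a_n^{-1}\max_{0\le k\le n}Z_k)_n$ is uniformly integrable. For this I would establish a uniform bound
$$\sup_n a_n^{-p}\,\mathbb E\Big[\max_{0\le k\le n}|Z_k|^p\Big]<+\infty$$
for some exponent $p\in(1,\beta)$; such an exponent exists precisely because $\beta>1$. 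Uniform integrability, and hence the claimed convergence of means, then follows at once.

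The heart of the argument is a moment estimate obtained by conditioning on the random walk $S$. Given $S$ one has $Z_n-Z_m=\sum_y\big(N_n(y)-N_m(y)\big)\,\xi_y$, a weighted sum of the i.i.d. centred (since $\beta>1$) scenery variables, which lie in the domain of attraction of $\mathcal S_\beta$. A moment inequality for such sums yields, for $1<p<\beta$,
$$\mathbb E\big[\,|Z_n-Z_m|^p\,\big|\,S\,\big]\le C\Big(\sum_y\big(N_n(y)-N_m(y)\big)^\beta\Big)^{p/\beta}.$$
Taking expectations, using the concavity of $x\mapsto x^{p/\beta}$ (as $p/\beta<1$) and the stationarity of the increments of $S$, gives
$$\mathbb E\big[\,|Z_n-Z_m|^p\,\big]\le C\Big(\mathbb E\Big[\sum_y N_{n-m}(y)^\beta\Big]\Big)^{p/\beta}\le C'\,a_{n-m}^p,$$
the last step resting on the standard local-time moment asymptotics $\mathbb E[\sum_y N_\ell(y)^\beta]\asymp a_\ell^\beta$, which one checks directly in each of the three regimes defining $a_n$.

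It remains to pass from this control of the increments to a bound on the maximum, and this is where I expect the main obstacle to lie, since the appropriate maximal inequality is regime-dependent. When $\alpha\in(1,2]$ the exponent in $a_{n-m}^p\asymp(n-m)^{p\delta}$ can be taken larger than $1$: one verifies that $\delta\beta>1$ holds exactly when $\alpha>1$, so that some $p\in(1/\delta,\beta)$ is admissible, and a Móricz–Serfling chaining argument then converts the increment bound into the desired uniform $L^p$ bound on $\max_{0\le k\le n}|Z_k|$. When $\alpha\le1$ the walk $S$ is transient (or critically recurrent) and $p\delta=p/\beta<1$ defeats the superadditive chaining; there $Z$ is instead close to a genuine sum of independent stable-domain variables, and I would split off the contribution of the first visits to each site—a martingale, to which Doob's $L^p$ inequality applies together with the sharp single-time estimate $\mathbb E[|Z_n|^p]\le C a_n^p$—while controlling the lower-order correction produced by the rare revisits. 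Assembling the regime-specific maximal bounds yields the uniform $L^p$ control, hence uniform integrability, and the lemma follows.
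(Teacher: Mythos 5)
Your overall strategy coincides with the paper's: prove convergence in distribution of $a_n^{-1}\max_{0\le k\le n}Z_k$, then upgrade to convergence of means via uniform integrability, obtained from a uniform bound on $a_n^{-p}\,\mathbb E[\max_{k\le n}|Z_k|^p]$ for some $p\in(1,\beta)$. Your conditional moment inequality $\mathbb E[|Z_n-Z_m|^p\,|\,S]\le C\big(\sum_y(N_n(y)-N_m(y))^\beta\big)^{p/\beta}$ is exactly what the paper establishes (via the von Bahr--Esseen characteristic-function formula \cite{VBE} for $\beta\in(1,2)$, directly for $\beta=2$), and your use of concavity of $x\mapsto x^{p/\beta}$ together with $\mathbb E[V_n(\beta)]=O(a_n^\beta)$ (from \cite{NadineClement} when $\alpha>1$, from H\"older with the range/self-intersection asymptotics when $\alpha\le 1$) is also the paper's route. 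The divergence is at the maximal-inequality step, and this is where your proposal has a genuine gap.

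The paper needs no regime-splitting at all: conditionally on $S$, the increments of $(Z_k)_k$ are weighted sums of the centered i.i.d.\ $\xi_y$ with \emph{nonnegative} weights, hence positively associated, and the demimartingale maximal inequality of Gong \cite{Gong} gives $\mathbb E[\max_{j\le n}|Z_j|^{\beta'}\,|\,S]\le c_{\beta'}\,\mathbb E[|Z_n|^{\beta'}\,|\,S]$ in one stroke, for every $\alpha\in(0,2]$. Your M\'oricz--Serfling chaining does work for $\alpha\in(1,2]$ (indeed $\delta\beta>1$ iff $\alpha>1$), but your branch for $\alpha\le 1$ fails as sketched. The premise that revisits are ``rare'' is false: for $\alpha\in(0,1)$ the walk is transient with $\mathbb E[R_n]\sim cn$, $c<1$, so the number of revisits $n-R_n$ is a positive fraction of $n$ (and $\sim n$ when $\alpha=1$, since $\mathbb E[R_n]\sim cn/\log n$); the correction $C_k=\sum_y(N_k(y)-1)^{+}\xi_y$ is consequently of the \emph{same} order $a_n$ as the first-visit part, not lower order. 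Worse, $(C_k)_k$ is not a martingale --- at a revisit time the increment $\xi_{S_k}$ is measurable with respect to the past, so its conditional expectation is the (generally nonzero) already-revealed value --- hence Doob's inequality is unavailable for it, and two-point chaining is exactly what you ruled out, since $\mathbb E[|C_n-C_m|^p]\le C a_{n-m}^p$ with exponent $p/\beta<1$ is not summable along dyadic blocks. So the maximum of the correction is left uncontrolled and the $\alpha\le1$ case (which the lemma must cover, e.g.\ for Proposition \ref{cvpsrange3}) is incomplete. The repair is simple: drop the first-visit/revisit splitting and apply, conditionally on $S$, the association-based maximal inequality of \cite{Gong} to the full process; your conditional moment bound then plugs in verbatim and yields the uniform $L^p$ bound in all regimes.
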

\begin{lem}[First coordinate of the Campanino and P\'etritis random walk]\label{LEM0bis}
$$\lim_{n\rightarrow +\infty}\frac{\mathbb E\left[\max_{k=0,...,n}M_k^{(1)}\right]}{n^{\frac 34}}=K_p\mathbb E\left[\sup_{t\in[0,1]}\Delta_t^{(0)} \right].$$
\end{lem}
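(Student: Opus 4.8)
The plan is to upgrade the already established convergence in distribution to a convergence of the means by showing that the rescaled maxima are uniformly integrable. Set $X_n:=n^{-\frac34}\max_{0\le k\le n}M_k^{(1)}$. Exactly as in the paragraph ``Proof of the convergences in distribution'', the $J_1$-convergence of $(M^{(1)}_{\lfloor nt\rfloor}/n^{\frac34})_t$ towards $(K_p\Delta_t^{(0)})_t$ established in \cite{GPN}, combined with the continuity of the supremum functional for the $J_1$-topology (Section 12.3 in \cite{Whitt}) and the continuous mapping theorem, shows that $X_n$ converges in distribution to $X:=K_p\sup_{t\in[0,1]}\Delta_t^{(0)}$. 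Since $M_0^{(1)}=0$ we have $X_n\ge 0$ and $X\ge 0$. I would then invoke the classical fact that convergence in distribution together with uniform integrability of $(X_n)_n$ implies $\mathbb E[X_n]\to\mathbb E[X]$ (and in particular $\mathbb E[X]<\infty$); so the whole problem reduces to proving that $(X_n)_n$ is uniformly integrable.

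For uniform integrability I would establish the stronger bound $\sup_n\mathbb E[X_n^2]<\infty$, i.e.
$$\mathbb E\Big[\big(\max_{0\le k\le n}M_k^{(1)}\big)^2\Big]=O(n^{\frac32}).$$
The first ingredient is the generalized random-walk-in-random-scenery representation of the first coordinate: writing the vertical position of $M$ as $M^{(2)}$ and $\epsilon=(\epsilon_y)_y$ as the scenery, one has $M_k^{(1)}=\sum_y\epsilon_y\,\mathcal N_k(y)$, where $\mathcal N_k(y)$ denotes the number of horizontal steps performed on the line $y$ up to time $k$. Conditioning on the whole vertical trajectory, the $\epsilon_y$'s are independent, centered and of unit variance, whence
$$\mathbb E\big[(M_\ell^{(1)})^2\big]=\mathbb E\Big[\sum_y\mathcal N_\ell(y)^2\Big].$$
Since $\mathcal N_\ell(y)$ is bounded by the occupation time of the line $y$ before time $\ell$, the right-hand side is at most the expected self-intersection local time of the length-$\ell$ lazy walk $M^{(2)}$, which is $O(\ell^{\frac32})$ because $M^{(2)}$ behaves like a one-dimensional simple random walk. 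Combining this with the stationarity of the increments of $M$ under $\mathbb P$ yields, for all $m\le n$,
$$\mathbb E\big[(M_n^{(1)}-M_m^{(1)})^2\big]=\mathbb E\big[(M_{n-m}^{(1)})^2\big]\le C\,(n-m)^{\frac32}.$$

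From this increment bound I would pass to the maximal bound by a classical maximal inequality of Móricz type: when $\mathbb E[(M_n^{(1)}-M_m^{(1)})^2]\le (g(m,n))^{\rho}$ for a superadditive set function $g(m,n)=C'(n-m)$ with exponent $\rho=\frac32>1$, one obtains $\mathbb E[\max_{0\le k\le n}(M_k^{(1)})^2]\le C''\,n^{\frac32}$, that is $\sup_n\mathbb E[X_n^2]<\infty$. A sequence bounded in $L^2$ is uniformly integrable, which closes the argument and gives $\mathbb E[X_n]\to K_p\,\mathbb E[\sup_{t\in[0,1]}\Delta_t^{(0)}]$.

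The convergence in distribution and the deduction from uniform integrability are routine; the genuine difficulty lies in the maximal moment bound. The obstruction, already stressed in the introduction, is that $M^{(1)}$ is neither a martingale nor a process with independent increments: the same $\epsilon_y$ is reused at each return to the line $y$, so Doob's inequality is not available and one must instead exploit the superadditivity of the increment moments through a Móricz-type inequality. The one genuinely model-dependent step is the estimate $\mathbb E[\sum_y\mathcal N_\ell(y)^2]=O(\ell^{\frac32})$, where the diffusive behaviour of the vertical walk enters and produces exactly the exponent $\frac32=2\cdot\frac34$ matching the normalisation $a_n=n^{\frac34}$.
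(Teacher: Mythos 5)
Your proposal is correct, and it shares the paper's overall skeleton: distributional convergence of $n^{-3/4}\max_{k\le n}M_k^{(1)}$ via \cite{GPN} and Section 12.3 of \cite{Whitt}, reduction to uniform integrability via an $L^2$ bound, the scenery representation $M_n^{(1)}=\sum_y\epsilon_y\tilde N_n(y)$ with $\tilde N_n(y)\le N_n(y)$, and the endpoint estimate $\mathbb E[(M_n^{(1)})^2]\le\mathbb E[V_n]=O(n^{3/2})$. Where you genuinely diverge is the one non-routine step, passing from the endpoint moment to the maximum. The paper conditions on the vertical walk $S$: given $S$, the increments $\epsilon_{S_k}\mathbf 1_{\{S_k=S_{k-1}\}}$ are centered, nondecreasing functions of the independent $\epsilon_y$'s, hence positively associated, so $(M_k^{(1)})_k$ is conditionally a demimartingale and Theorem 2.1 of \cite{Gong} gives $\mathbb E[\max_{j\le n}|M_j^{(1)}|^2\,|\,S]\le c_2\,\mathbb E[|M_n^{(1)}|^2\,|\,S]\le c_2 V_n$ in one stroke, after which one integrates over $S$. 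You instead work unconditionally: annealed stationarity of the increments (which is indeed available -- it is exactly the ergodic representation $(M_j)_j\stackrel{d}{=}(\sum_{k=0}^{j-1}f\circ T^k)_j$ built in the proof of Proposition \ref{cvpsrange}) yields $\mathbb E[(M_n^{(1)}-M_m^{(1)})^2]\le C(n-m)^{3/2}$ for all $m\le n$, and the M\'oricz--Longnecker--Serfling maximal inequality with the superadditive $g(m,n)=C^{2/3}(n-m)$ and exponent $\rho=3/2>1$ then bounds $\mathbb E[\max_{k\le n}|M_k^{(1)}|^2]$ by $C''n^{3/2}$; this is a legitimate application, and since $M_0^{(1)}=0$ the bound on $\max_k|M_k^{(1)}|$ controls $\max_k M_k^{(1)}$. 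Your route buys simplicity -- no association, no demimartingales, only stationarity plus an endpoint moment bound valid for arbitrary dependent summands -- but it is tied to the self-intersection exponent $3/2$ being strictly larger than $1$: in the companion Lemma \ref{LEM0} the analogous endpoint bound is $\mathbb E[|Z_n|^{\beta'}]=O(a_n^{\beta'})$, and when $\alpha\le 1$ one has $a_n^{\beta'}\asymp n^{\beta'/\beta}$ up to logarithms with $\beta'/\beta<1$, so the superadditivity hypothesis of the M\'oricz inequality fails there, whereas the conditional positive-association argument goes through unchanged for any $\beta'\in(1,\beta)$; this is presumably why the paper uses Gong's inequality uniformly for both lemmas. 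If you write your version up, do make explicit the uniform constant in $\mathbb E[V_\ell]\le C\ell^{3/2}$ for all $\ell\ge 1$ (it follows from $\mathbb E[V_\ell]=\sum_{i,j=1}^\ell\mathbb P(S_{|i-j|}=0)$ and the local limit theorem for the lazy vertical walk) and cite a precise statement of the maximal inequality you invoke.
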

\begin{proof}[Proof of Lemma \ref{LEM0}]
As explained above, we know that $(a_n^{-1}\max_{0\le k\le n}Z_k)_n$ converges in distribution
to $\sup_{t\in[0,1]}\Delta_t $
as $n$ goes to infinity.
Now let us prove that this sequence is
uniformly integrable. 
To this end we will use the fact that, conditionally to
the walk $S$, the increments of $(Z_n)_n$ are centered and positively associated.
Let $\beta'\in(1,\beta)$ be fixed.
Due to Theorem 2.1 of \cite{Gong}, there exists some constant $c_{\beta'}>0$ such that
\begin{eqnarray*}
\mathbb E\left[\left|\max_{j=0,...,n}Z_j\right|^{{\beta'}}|S\right] &\le & \mathbb E\left[\max_{j=0,...,n} |Z_j|^{{\beta'}} |S\right]   \\
& \le& c_{\beta'} \mathbb E\left[|Z_n|^{{\beta'}}|S\right]\\
\end{eqnarray*}
so
\begin{eqnarray*}
\mathbb E\left[\left|\max_{j=0,...,n}Z_j\right|^{{\beta'}}\right] &=& \mathbb E\left[\mathbb E\left[\left|\max_{j=0,...,n}Z_j\right|^{{\beta'}}|S\right]\right]\\
& \le& c_{\beta'} \mathbb E\left[|Z_n|^{{\beta'}}\right].
  \end{eqnarray*}  
It remains now to prove that $\mathbb E[|Z_n|^{\beta'}]=O(a_n^{\beta'})$.
\\
\noindent Let us first consider the easiest case when the random scenery is square integrable that is $\beta=2$, then we take $\beta'=2$ in the above computations and observe that 
$\mathbb E\left[|Z_n|^{2}\right]=\mathbb E[\xi_0^2]\mathbb E[V_n]$, 
where $V_n$ is the number of self-intersections up to time $n$
of the random walk $S$, i.e.  
$V_n=\sum_x(N_n(x))^2=\sum_{i,j=1}^n{\mathbf 1}_{S_i=S_j}$. 
Usual computations (see Lemma 2.3 in \cite{bol}) give that
$$\mathbb E[V_n]=\sum_{i,j=1}^n\mathbb P(S_{i-j}=0) \sim c'(a_n)^2$$
and the result follows.
\\
\noindent When $\beta\in(1,2)$, let us define $V_n(\beta)$ as follows
$$V_n(\beta):=\sum_{y\in\mathbb Z}(N_n(y))^\beta.$$
Due to Lemma 2 of \cite{VBE},
\begin{equation}\label{EZ_nbeta'} 
\mathbb E\left[|Z_n|^{\beta'}\right] =\frac {\Gamma(\beta'+1)}{\pi}\sin\left(\frac{\pi\beta'} 2\right)\int_{\mathbb R}
\frac{1-\mbox{Re}(\varphi_{Z_n}(t))}{|t|^{\beta'+1}}\, dt,
\end{equation}
where $\varphi_{Z_n}$ stands for the characteristic function of $Z_n$,
which is given by
\begin{equation}\label{fonctioncar}
\forall t\in\mathbb R,\quad
\varphi_{Z_n}(t):=\mathbb E[e^{itZ_n}]=\mathbb E\left[\mathbb E\left[e^{itZ_n}|(S_k)_k\right]\right]
=\mathbb E\left[\prod_{y\in\mathbb Z}\varphi_\xi(tN_n(y))\right].
\end{equation}
Due to our assumptions on $\xi$, we know that 
$1-\varphi_\xi(u) = |u|^\beta(A_1+i A_2 \mbox{sgn} (u))(1+o(1))$ as $u$ goes to $0$.
Let $A,B>0$ be such that  $|1-\varphi_\xi(u)|<B|u|^\beta$ for every real number $u$ satisfying $|u|<A$.
Hence, for every $t$ such that $|t|<A(V_n(\beta))^{-\frac 1\beta}$, we have
$|tN_n(y)|\le A$ and so $|1-\varphi_\xi(tN_n(y))|\le B|t|^\beta(N_n(y))^\beta$ and
\begin{eqnarray*}
\left|1-\mbox{Re}\left(\prod_{y\in\mathbb Z}\varphi_\xi(tN_n(y))\right)\right|&\le&
\left|1-\left(\prod_{y\in\mathbb Z}\varphi_\xi(tN_n(y))\right)\right|\\
&\le&\sum_{y\in\mathbb Z}|1-\varphi_\xi(tN_n(y))|\\
&\le& B |t|^\beta V_n(\beta).
\end{eqnarray*}
Hence 
\begin{eqnarray}
\int_{|t|<A(V_n(\beta))^{-\frac 1\beta}}\frac{\left|1-\mbox{Re}\left(\prod_{y\in\mathbb Z}\varphi_\xi(tN_n(y))\right)\right|}{
|t|^{\beta'+1}}\, dt
&\le&\int_{|t|<A(V_n(\beta))^{-\frac 1\beta}}\frac{B |t|^\beta V_n(\beta)}{
|t|^{\beta'+1}}\, dt\nonumber\\
&\le&B V_n(\beta)\int_{|t|<A(V_n(\beta))^{-\frac 1\beta}}|t|^{\beta-\beta'-1}\, dt\nonumber\\
&\le&\frac{2\, A^{\beta-\beta'}B}{\beta-\beta'} (V_n(\beta))^{\frac{\beta'}{\beta}}\label{INT1}.
\end{eqnarray}
Moreover
\begin{eqnarray}
\int_{|t|\ge A(V_n(\beta))^{-\frac 1\beta}}\frac{\left|1-\mbox{Re}\left(\prod_{y\in\mathbb Z}\varphi_\xi(tN_n(y))\right)\right|}{
|t|^{\beta'+1}}\, dt
&\le&2\int_{|t|\ge A(V_n(\beta))^{-\frac 1\beta}}
|t|^{-\beta'-1}\, dt\nonumber\\          
&\le& 4 \beta' A^{-\beta'}(V_n(\beta))^{\frac {\beta'}\beta}.\label{INT2}
\end{eqnarray}
Putting together \eqref{EZ_nbeta'}, \eqref{fonctioncar}, \eqref{INT1}
and \eqref{INT2}, we obtain that there  exists some constant $C>0$ such that for every $n$
$$\mathbb E[|Z_n|^{\beta'}]\leq C \mathbb E\left[(V_n(\beta))^{\frac{\beta'}\beta}\right].$$
If $\alpha >1$, due to Lemma 3.3 of \cite{NadineClement}, we know that
$\mathbb E[V_n(\beta)]=O\left(a_n^{\beta}\right)$ and so
\begin{equation}\label{Vnbeta'}
\mathbb E\left[(V_n(\beta))^{\frac{\beta'}\beta}\right]=O\left(a_n^{\beta'}\right).
\end{equation}
If $\alpha \in (0,1]$, using H\"{o}lder's inequality, we have
$$\mathbb E[V_n(\beta)] \leq \mathbb E[R_n]^{1-\frac{\beta}{2}} \mathbb E[V_n]^{\frac{\beta}{2}}.$$
Now if $\alpha =1$, we know that $\mathbb E[R_n] \sim c\frac{n}{\log n}$ (see for instance Theorem 6.9, page 398 in \cite{LGR}) and $\mathbb E[V_n] \sim c n\log n$ so $\mathbb E [V_n(\beta)] = O\left(a_n^{\beta}\right)$
with $a_n = n^{\frac{1}{\beta}} (\log n)^{1-\frac{1}{\beta}}$. In the case $\alpha\in (0,1)$,the random walk is transient and the expectations of $R_n$ and $V_n$ behaves as $n$, we deduce that
 $\mathbb E [V_n(\beta)] = O\left(a_n^{\beta}\right)$
with $a_n = n^{\frac{1}{\beta}}$.\\*
We conclude that
$$    \lim_{n\rightarrow+\infty}\mathbb E\left[\max_{j=0,...,n}\frac{Z_j}{a_n}\right]
        = \mathbb E\left[\max_{t\in[0,1]}\Delta_t \right].$$
\end{proof}
\begin{proof}[Proof of Lemma \ref{LEM0bis}]
We know that $(n^{-\frac 34}\max_{k=0,...,n}M_k^{(1)})_n$ converges in distribution
to $K_p\sup_{t\in[0,1]}\Delta_t^{(0)}$.
To conclude, it is enough to prove that this sequence is
uniformly integrable. To this end we will prove that
it is bounded in $L^2$.
\\
Recall that the second coordinate of the Campanino and P\'etritis random walk is a random walk. Let us write it $(S_n)_n$.
Observe that 
$$M_n^{(1)}:=\sum_{k=1}^n \varepsilon_{S_k}\ind_{\{S_k=S_{k-1}\}}=\sum_{y\in\mathbb Z} \varepsilon_{y}\tilde N_n(y),$$
with $\tilde N_n(y):=\#\{k=1,...,n\ :\ S_k=S_{k-1}=y\}$.
Observe that $\tilde N$ is measurable with respect to the random walk $S$ and that $0\le \tilde N_n(y)\le N_n(y)$.
\\
Conditionally to
the walk $S$, the increments of $(M^{(1)}_n)_n$ are centered and positively associated. It follows from Theorem 2.1 of \cite{Gong} that 
\begin{eqnarray*}
\mathbb E\left[\left|\max_{j=0,...,n}M^{(1)}_j\right|^2|S\right] 
& \le& c_{2} \mathbb E\left[|M^{(1)}_n|^2|S\right]\\
& \le& c_{2} \sum_{y\in\mathbb Z}(\tilde N_n(y))^2 \le c_{2} V_n,
\end{eqnarray*}
where again $V_n=\sum_{y\in\mathbb Z}(N_n(y))^2$.
Therefore
$$
\mathbb E\left[\left|\max_{j=0,...,n}M^{(1)}_j\right|^2\right] \le  c_{2} \mathbb E[V_n].$$
Again the result follows from the fact that $\mathbb E[V_n]\sim c'n^{\frac 32}$.
\end{proof}

\begin{proof}[Proof of the lower bound of Proposition \ref{cvpsrange4}]
Let $\mathcal N_n(x):=\#\{k=1,...,n\,:\,Z_k=x\}$.
Applying the Cauchy-Schwarz inequality to 
$n= \sum_x  \mathcal N_n(x) \mathbf 1_{\{\mathcal N_n(x)>0\}} $, we obtain
$$n^2\le \sum_y \mathbf 1_{\{\mathcal N_n(y)>0\}}\, \sum_x(\mathcal N_n(x))^2 =\mathcal R_n\, \mathcal V_n,$$
with $\mathcal V_n=\sum_{x}(\mathcal N_n(x))^2
         =\sum_{i,j=1}^n\mathbf 1_{\{Z_i=Z_j\}}$
the number of self-intersections of $Z$ up to time $n$
and so using Jensen's inequality,
$$\frac{ \mathbb E[\mathcal R_n]}{a_n} \ge \frac{n^2}{a_n}  \mathbb E[(\mathcal V_n)^{-1}]\ge \frac{n^2}{a_n}{\mathbb E[\mathcal V_n]^{-1}}.$$
Moreover, using the local limit theorems for the RWRS proved in \cite{BFFN1,FFN},
\begin{eqnarray*}
\mathbb E[\mathcal V_n]&=&
    n+ 2 \sum_{1\leq i <j \leq n} \mathbb P(Z_{j-i}=0)  \sim    C' \frac{n^2}{a_n}.   
\end{eqnarray*}
Hence 
    $$\liminf_{n\rightarrow +\infty} \frac{\mathbb E[\mathcal R_n]}{a_n }\ge \frac 1{C'} >0.$$
\end{proof}    

\end{document}